\numberwithin{equation}{section}
\numberwithin{figure}{section}
\theoremstyle{plain}
\newtheorem{thm}{\protect\theoremname}
\theoremstyle{plain}
\newtheorem{lem}[thm]{\protect\lemmaname}
\theoremstyle{plain}
\newtheorem{cor}[thm]{\protect\corollaryname}
\theoremstyle{definition}
\newtheorem{example}[thm]{\protect\examplename}
\providecommand{\corollaryname}{Corollary}
\providecommand{\examplename}{Example}
\providecommand{\lemmaname}{Lemma}
\providecommand{\theoremname}{Theorem}
\begin{document}
\title{On the Zeros of Entire $q$-Functions}
\author{Ruiming Zhang}
\email{ruimingzhang@guet.edu.cn}
\address{School of Mathematics and Computing Sciences\\
Guilin University of Electronic Technology\\
Guilin, Guangxi 541004, P. R. China. }
\subjclass[2000]{Primary 33A15; Secondary 30C15.}
\keywords{q-Exponentials; q-functions; q-Plane wave function.}
\thanks{This work is supported by the National Natural Science Foundation
of China grants No. 11771355 and No. 12161022.}
\begin{abstract}
In this work we prove that entire $q$-functions have infinitely many
nonzero roots $\left\{ \rho_{n}\right\} _{n=1}^{\infty}$, as $n\to+\infty$
the moduli $\left|\rho_{n}\right|$ grow at least exponentially. Applications
to $q$-transcendental entire functions defined by series expansions
are provided. These functions include the $q$-analogue of the plane
wave function $\mathcal{E}_{q}(z,t)$. 
\end{abstract}

\maketitle

\section{\label{sec:Intro} Introduction }

Many well-known entire $q$-functions are of type 
\begin{equation}
f(z)=\sum_{n=0}^{\infty}f_{n}q^{\alpha n^{2}}z^{n},\quad\sup_{n\ge0}\left|f_{n}\right|<\infty,\ \sup\left\{ n:f_{n}\neq0\right\} =\infty,\label{eq:1.1}
\end{equation}
where $\alpha>0$ and $0<\left|q\right|<1$. They are order $0$ entire
functions with infinitely many nonzero roots $\left\{ \rho_{n}\right\} _{n=1}^{\infty}$
with moduli satisfying
\begin{equation}
0<\left|\rho_{1}\right|\le\left|\rho_{2}\right|\le\cdots.\label{eq:1.2}
\end{equation}
For example, a Euler $q$-exponential function $E_{q}(z)$, \cite{AndrewsAskeyRoy,IsmailBook}
\begin{equation}
E_{q}(z)=\sum_{n=0}^{\infty}\frac{q^{\binom{n}{2}}z{}^{n}}{(q;q)_{n}},\quad z\in\mathbb{C}\label{eq:1.3}
\end{equation}
and Ramanujan's entire function $A_{q}(z)$, \cite{Andrews1,IsmailBook}
\begin{equation}
A_{q}(z)=\sum_{n=0}^{\infty}\frac{q^{n^{2}}(-z)^{n}}{(q;q)_{n}},\quad z\in\mathbb{C}.\label{eq:1.4}
\end{equation}
Here the $q$-shifted factorials are defined by \cite{AndrewsAskeyRoy,IsmailBook,KoekoekSwarttouw}
\begin{equation}
(a;q)_{\infty}=E_{q}(-z),\quad(a;q)_{n}=\frac{(a;q)_{\infty}}{(aq^{n};q)_{\infty}},\quad a,n\in\mathbb{C},\ \left|q\right|<1.\label{eq:1.5}
\end{equation}
More generally, for any nonnegative integers $r,s$ with $s\ge r$
the basic hypergeometric seres (a.k.a $q$-series), \cite{AndrewsAskeyRoy,IsmailBook,KoekoekSwarttouw}
\begin{equation}
_{r}\phi_{s}\left(\begin{array}{c}
a_{1},\dots,a_{r}\\
b_{1},\dots,b_{s}
\end{array}\bigg|q,z\right)=\sum_{n=0}^{\infty}\frac{(a_{1},a_{2},\dots,a_{r};q)_{n}z^{n}}{(q,b_{1},b_{2},\dots,b_{s};q)_{n}}\left(-q^{\frac{n-1}{2}}\right)^{n(s+1-r)}\label{eq:1.6}
\end{equation}
defines an entire function in variable $z$ where $a_{1},\ a_{2},\ \dots,\ a_{r},\ b_{1},\ b_{2},\ \dots,\ b_{s}$
are complex numbers and the short-hand notation $(a_{1},a_{2},\dots,a_{r};q)_{n}$
stands for
\begin{equation}
(a_{1},a_{2},\dots,a_{r};q)_{n}=\prod_{k=1}^{r}(a_{k};q)_{n}.\label{eq:1.7}
\end{equation}
Evidently, the functions $E_{q}(z)$, $A_{q}(z)$, the $q$-Bessel
functions $J_{\nu}^{(2)}(z;q)z^{-\nu}$ and $J_{\nu}^{(3)}(z;q)z^{-\nu}$
, \cite{Hayman,IsmailBook,Koelink,KoekoekSwarttouw}
\begin{align}
J_{\nu}^{(2)}(z;q) & =\frac{(q^{\nu+1};q)_{\infty}}{(q;q)_{\infty}}\left(\frac{z}{2}\right)^{\nu}\sum_{n=0}^{\infty}\frac{q^{n^{2}+n\nu}\left(-z^{2}/4\right)^{n}}{(q,q^{\nu+1};q)_{n}},\label{eq:1.8}\\
J_{\nu}^{(3)}(z;q) & =\frac{(q^{\nu+1};q)_{\infty}}{(q;q)_{\infty}}\left(\frac{z}{2}\right)^{\nu}\sum_{n=0}^{\infty}\frac{q^{n(n+1)/2}\left(-z^{2}/4\right)^{n}}{(q,q^{\nu+1};q)_{n}},\label{eq:1.9}
\end{align}
are all special cases of (\ref{eq:1.6}). For arbitrary positive $\alpha$
a more general $q$-exponential function $E_{q}^{(\alpha)}(z;q)$
is defined by 
\begin{equation}
E_{q}^{(\alpha)}(z;q)=\sum_{n=0}^{\infty}\frac{q^{\alpha n^{2}}z^{n}}{(q;q)_{n}},\quad z\in\mathbb{C},\label{eq:1.10}
\end{equation}
it is clearly of type (\ref{eq:1.1}) but not of type (\ref{eq:1.6}). 

We notice that all the functions in (\ref{eq:1.6}) and (\ref{eq:1.10})
satisfy the condition 
\begin{equation}
f_{n}=A\left(1+\mathcal{O}\left(q^{n}\right)\right),\quad n\to\infty,\label{eq:1.11}
\end{equation}
where $A$ is a nonzero constant. Under condition (\ref{eq:1.11})
Hayman proved that the $n$-th nonzero root $\rho_{n}$ of $f(z)$
in (\ref{eq:1.1}) possess a asymptotic expansion, \cite{Hayman,Ismail1}
\begin{equation}
\rho_{n}=q^{1-2n}\sum_{k=0}^{\infty}d_{k}q^{nk},\quad n\to\infty,\label{eq:1.12}
\end{equation}
where $\left\{ d_{k}\right\} _{k=0}^{\infty}$ are constants independent
of $n$ with $d_{0}\neq0$. 

However, there are many entire functions are of type (\ref{eq:1.1})
not satisfying (\ref{eq:1.11}). For example, 
\begin{equation}
\sum_{n=0}^{\infty}f_{n}q^{a(n)}b_{n}^{it}z^{n},\quad\sup_{n\ge0}\left|f_{n}\right|<\infty,\ \sup\left\{ n:f_{n}\neq0\right\} =\infty,\label{eq:1.13}
\end{equation}
where 
\begin{equation}
a(n)=a_{0}+a_{1}n+\cdots+a_{k}n^{k},\quad k\ge2,\ a_{k}>0,\quad b_{n}>0,\quad t\in\mathbb{R}.\label{eq:1.14}
\end{equation}
There are many entire functions defined as polynomial expansions.
For example, 
\begin{equation}
\mathcal{L}(z;\alpha,q)=\sum_{n=0}^{\infty}\left(q^{\alpha+2n+\frac{1}{2}};q\right)_{\infty}L_{n}^{(\alpha+n-\frac{1}{2})}\left(z;q\right)q^{n^{2}/2}q^{\alpha n}\label{eq:1.16}
\end{equation}
is such a example. Here $L_{n}^{(\alpha)}\left(z;q\right)$ is the
$q$-Laguerre polynomials respectively. The $q$-exponential function
$\mathcal{E}_{q}(z;t)$ of Ismail-Zhang provides yet another example,
\cite{IsmailBook,IsmailRZhang1,IsmailStanton}. It can be defined
by \cite{IsmailBook,IsmailStanton,IsmailRZhang1}
\begin{equation}
(qt^{2};q^{2})_{\infty}\mathcal{E}_{q}(z;t)=\sum_{n=0}^{\infty}\frac{q^{\frac{n^{2}}{4}}t^{n}}{(q;q)_{n}}H_{n}(z\vert q),\quad\left|t\right|<1,\label{eq:1.17}
\end{equation}
 where the $q$-Hermite polynomials are defined by
\begin{equation}
\frac{1}{\left(te^{i\theta},te^{-i\theta};q\right)_{\infty}}=\sum_{n=0}^{\infty}\frac{H_{n}(\cos\theta\vert q)t^{n}}{(q;q)_{n}},\quad\left|t\right|<1,\ \theta\in[0,\pi].\label{eq:1.18}
\end{equation}
For any fixed $q,\ t$ with $0<q<1$ and $\left|t\right|<1$, it is
known that the function $\mathcal{E}_{q}(z;t)$ is entire in $z$
satisfying\cite{IsmailBook,IsmailStanton}
\begin{equation}
\lim_{q\uparrow1}\mathcal{E}_{q}\left(x;\frac{(1-q)t}{2}\right)=e^{tx}\label{eq:1.19}
\end{equation}
 and
\begin{equation}
\lim_{r\to\infty}\frac{\log M(r,\mathcal{E}_{q})}{\log^{2}r}=\frac{1}{\log q^{-1}},\label{eq:1.20}
\end{equation}
 where for any $r>0$
\begin{equation}
M(r,f)=\sup\left\{ \left|f(z)\right|:\left|z\right|\le r\right\} .\label{eq:1.21}
\end{equation}
In this work we prove if there exists a positive number $A>0$ such
that
\begin{equation}
\log M(r,f)=\mathcal{O}\left(\log^{A}r\right),\quad r\to\infty\label{eq:1.22}
\end{equation}
then the entire function $f(z)$ has infinitely many nonzero roots
$\left\{ \rho_{n}\right\} _{n=1}^{\infty}$, as $n\to\infty$ $\log\left|\rho_{n}\right|$
grow at least as fast as a linear function of $n$ for $A\ge1$, whereas
for $1>A>0$ $\log\left|\rho_{n}\right|$ grow at least as a linear
function of $n^{1/A}$ .

\section{Main Results \label{sec:Main-Results}}

Under the additional conditions $0<q<1$ the class of functions defined
in (\ref{eq:1.1}) is equivalent to the class of entire functions
described by Ismail in \cite[Lemma 14.1.4]{IsmailBook} via a obvious
scaling transformation $q^{\alpha}\to p$,
\begin{equation}
F(z)=\sum_{n=0}^{\infty}f_{n}p^{n^{2}}z^{n},\quad0<p<1.\label{eq:2.1}
\end{equation}
It is shown there that $F(z)$ is order $0$, it has infinitely many
zeros and satisfies 
\begin{equation}
\limsup_{r\to\infty}\frac{\log M(r,F)}{\log^{2}r}\le\frac{1}{4\log p^{-1}}.\label{eq:2.2}
\end{equation}
The following lemma is a minor refinement of Lemma 14.1.4 in \cite{IsmailBook},
it reduces the functions of type (\ref{eq:1.1}) to the cases covered
by Theorem \ref{thm:series}.
\begin{lem}
\label{lem:modulus} If 
\begin{equation}
f(z)=\sum_{n=0}^{\infty}f_{n}q^{\alpha n^{2}}z^{n},\quad\left\{ f_{n}\right\} _{n=0}^{\infty}\subset\mathbb{C}\label{eq:2.3}
\end{equation}
 such that 
\begin{equation}
\alpha>0,\quad0<\left|q\right|<1,\quad\sup_{n\ge0}\left|f_{n}\right|<\infty,\label{eq:2.4}
\end{equation}
then for any $r>0$,
\begin{equation}
M(r,f)\le\begin{cases}
\frac{\theta\left(\left|q\right|^{2\alpha}\right)\sup_{n\ge0}\left|f_{n}\right|}{\left|q\right|^{\alpha}}, & 0\le r\le\left|q\right|^{-2\alpha},\\
2\theta\left(\left|q\right|^{2\alpha}\right)\sup_{n\ge0}\left|f_{n}\right|\cdot r\cdot\exp\left(-\frac{\log^{2}r}{4\alpha\log\left|q\right|}\right), & r>\left|q\right|^{-2\alpha},
\end{cases}\label{eq:2.5}
\end{equation}
where $\theta(q)=\sum_{n\in\mathbb{Z}}q^{n^{2}/2}$. Consequently,
\begin{equation}
\limsup_{r\to\infty}\frac{\log M(r,f)}{\log^{2}r}\le-\frac{1}{4\alpha\log\left|q\right|}.\label{eq:2.6}
\end{equation}
\end{lem}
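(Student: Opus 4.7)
The plan is to bound $|f(z)|$ on the closed disk $|z|\le r$ by the absolutely convergent majorant
\[
|f(z)|\le M\sum_{n=0}^{\infty}p^{n^{2}}r^{n},\qquad M:=\sup_{n\ge0}|f_{n}|,\quad p:=|q|^{\alpha}\in(0,1),
\]
and then to exploit the completing-the-square identity
\[
p^{n^{2}}r^{n}=p^{-x^{2}}\,p^{(n-x)^{2}},\qquad x:=\frac{\log r}{-2\log p}=\frac{\log r}{2\alpha\log|q|^{-1}},
\]
which isolates the dominant Gaussian factor $p^{-x^{2}}=\exp\bigl(-\log^{2}r/(4\alpha\log|q|)\bigr)$ and leaves a theta-like remainder. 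The full theta sum $\theta(p^{2})=\sum_{m\in\mathbb{Z}}p^{m^{2}}$ will serve as a universal majorant for the remainder.

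For the small-$r$ range $r\le p^{-2}=|q|^{-2\alpha}$, I would use the crude estimate $r^{n}\le p^{-2n}$ to get
\[
\sum_{n=0}^{\infty}p^{n^{2}}r^{n}\le\sum_{n=0}^{\infty}p^{(n-1)^{2}-1}=p^{-1}\sum_{m=-1}^{\infty}p^{m^{2}}\le p^{-1}\theta(p^{2}),
\]
which immediately yields the first branch of \eqref{eq:2.5}.

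For the large-$r$ range $r>p^{-2}$, I would set $n_{0}:=\lfloor x\rfloor\ge1$ and reindex by $n=n_{0}+k$. Because $p^{-2n_{0}}\le r<p^{-2(n_{0}+1)}$, the ratio $u:=p^{2n_{0}}r$ lies in $[1,p^{-2})$, and
\[
p^{n^{2}}r^{n}=p^{n_{0}^{2}}r^{n_{0}}\,u^{k}p^{k^{2}}.
\]
Splitting the sum at $k=0$, on the negative tail I use $u^{k}\le1$ (since $u\ge1$) to bound $u^{k}p^{k^{2}}\le p^{k^{2}}$; on the nonnegative tail I use $u^{k}\le p^{-2k}$ to get $u^{k}p^{k^{2}}\le p^{-1}p^{(k-1)^{2}}$. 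Extending both partial sums to all of $\mathbb{Z}$ gives a majorant of at most $(1+p^{-1})\theta(p^{2})\le2p^{-1}\theta(p^{2})$. The prefactor collapses under the completing-the-square identity to $p^{n_{0}^{2}}r^{n_{0}}=p^{-x^{2}}p^{(n_{0}-x)^{2}}\le p^{-x^{2}}$, since $p<1$ and $|n_{0}-x|\le1$. Absorbing the $p^{-1}=|q|^{-\alpha}$ into a factor of $r$, which is legitimate precisely because $r>p^{-2}>p^{-1}$, produces the second branch of \eqref{eq:2.5}. The asymptotic \eqref{eq:2.6} then follows by taking logarithms of that bound and dividing by $\log^{2}r$.

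No deep obstacle is expected; the step demanding the most care is the reindexing combined with the two-sided split of the sum, which must be arranged so that both tails are uniformly controlled by a universal multiple of $\theta(p^{2})$ independently of $r$ and $n_{0}$. The only other mild bookkeeping point is verifying that $r>p^{-2}$ indeed places $n_{0}\ge1$, so that the shift by $n_{0}$ does not introduce boundary effects from $n=0$.
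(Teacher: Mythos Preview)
Your argument is correct and follows essentially the same route as the paper: majorize by $M\sum_{n\ge0}p^{n^{2}}r^{n}$, handle small $r$ by the crude bound $r^{n}\le p^{-2n}$, and for large $r$ reindex around $n_{0}=\lfloor x\rfloor$ (the paper's $\mu$), splitting the sum into two tails each dominated by a theta-type series, with the completing-the-square identity isolating the Gaussian prefactor. The only cosmetic difference is that the paper first rescales to the normalization $q^{n^{2}/2}$ before carrying out the identical estimate, whereas you keep $p=|q|^{\alpha}$ throughout.
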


\begin{proof}
It is clear that by obvious scalings (\ref{eq:2.3}) can be transformed
into the form, 
\begin{equation}
f(z)=\sum_{n=0}^{\infty}f_{n}q^{n^{2}/2}z^{n},\quad0<\left|q\right|<1,\ \sup_{n\ge0}\left|f_{n}\right|<\infty.\label{eq:2.7}
\end{equation}
For any $r$ with $0<r\le\left|q\right|^{-1}$ on $\left|z\right|\le r$
we have
\begin{equation}
\left|f(z)\right|\le\sup_{n\ge0}\left|f_{n}\right|\cdot\sum_{n=0}^{\infty}\left|q\right|^{n^{2}/2}r^{n}\le\sup_{n\ge0}\left|f_{n}\right|\cdot\sum_{n=0}^{\infty}\left|q\right|^{n^{2}/2-n}\le\frac{\theta\left(\left|q\right|\right)\sup_{n\ge0}\left|f_{n}\right|}{\sqrt{\left|q\right|}}.\label{eq:2.8}
\end{equation}
For $r>\left|q\right|^{-1}$ let 
\begin{equation}
\mu=\left\lfloor -\frac{\log r}{\log\left|q\right|}\right\rfloor \ge1,\quad1\le\left|q\right|^{\mu}r\le\frac{1}{\left|q\right|}.\label{eq:2.9}
\end{equation}
Then on $\left|z\right|\le r$ we have
\begin{equation}
\begin{aligned} & \left|f(z)\right|\le\sup_{n\ge0}\left|f_{n}\right|\cdot\left(\sum_{n=0}^{\mu}\left|q\right|^{n^{2}/2}r^{n}+\sum_{n=\mu+1}^{\infty}\left|q\right|^{n^{2}/2}r^{n}\right)\\
 & =\sup_{n\ge0}\left|f_{n}\right|\cdot\left(\sum_{k=0}^{\mu}\left|q\right|^{(\mu-k)^{2}/2}r^{\mu-k}+\sum_{k=1}^{\infty}\left|q\right|^{(\mu+k)^{2}/2}r^{\mu+k}\right)\\
 & =\sup_{n\ge0}\left|f_{n}\right|\cdot\left|q\right|^{\mu^{2}/2}r^{\mu}\left(\sum_{k=0}^{\mu}\left|q\right|^{k^{2}/2}\left(\left|q\right|^{\mu}r\right)^{-k}+\sum_{k=1}^{\infty}\left|q\right|^{k^{2}/2}\left(\left|q\right|^{\mu}r\right)^{k}\right)\\
 & \le\sup_{n\ge0}\left|f_{n}\right|\cdot\sqrt{\left|q\right|}\exp\left(-\frac{\log^{2}r}{2\log\left|q\right|}\right)\left(\sum_{k=0}^{\mu}\left|q\right|^{k^{2}/2}+\sum_{k=1}^{\infty}\left|q\right|^{k^{2}/2-k}\right)\\
 & \le2\theta\left(\left|q\right|\right)\sup_{n\ge0}\left|f_{n}\right|\cdot r\cdot\exp\left(-\frac{\log^{2}r}{2\log\left|q\right|}\right).
\end{aligned}
\label{eq:2.10}
\end{equation}
\end{proof}
Suppose that $r>0$ and $f(z)$ is an analytic function in a region
in the complex plane which contains the closed disk ${\displaystyle \left|z\right|\le r}$,
${\displaystyle \rho_{1},\rho_{2},\ldots,\rho_{n}}$ are the zeros
of $f(z)$ in the open disk ${\displaystyle \left|z\right|<r}$ (repeated
according to their respective multiplicity), and that ${\displaystyle f(0)=1}$.
Jensen's formula states that \cite{Ahlfors,Boas}

\begin{equation}
{\displaystyle \frac{1}{2\pi}\int_{0}^{2\pi}\log|f(re^{i\theta})|d\theta=\sum_{k=1}^{n}\log\left(\frac{r}{\left|\rho_{k}\right|}\right)=\int_{0}^{r}\frac{n(t)}{t}dt,}\label{eq:2.11}
\end{equation}
where $n(t)$ denotes the number of zeros of $f(z)$ in the disc of
radius $t$ centered at the origin.
\begin{thm}
\label{thm:series}Let $f(z)$ be an entire function such that $f(0)=1$
and for a positive number $A$, 
\begin{equation}
\limsup_{r\to\infty}\frac{\log M(r,f)}{\log^{A}r}<\infty.\label{eq:2.12}
\end{equation}
Then $f(z)$ has infinitely many nonzero roots $\left\{ \rho_{n}\right\} _{n\in\mathbb{N}}$
such that
\begin{equation}
0<\left|\rho_{1}\right|\le\left|\rho_{2}\right|\le\cdots\le\left|\rho_{n}\right|\le\cdots.\label{eq:2.13}
\end{equation}
Let $C$ be any positive number satisfying
\begin{equation}
C>\limsup_{r\to\infty}\frac{\log M(r,f)}{\log^{A}r}.\label{eq:2.14}
\end{equation}
Then we have: 
\begin{enumerate}
\item Case $0<A\le1$. For any $C$ in (\ref{eq:2.14}) there exists a positive
integer $N$ such that 
\begin{equation}
\left|\rho_{n}\right|>e^{2(2C)^{1/A}n^{1/A}},\quad\forall n\ge N.\label{eq:2.15}
\end{equation}
\item Case $A>1$. For any $C$ in (\ref{eq:2.14}) and any $\eta$ with
$1>\eta>0$ there exists a positive integer $N$ such that 
\begin{equation}
\left|\rho_{n}\right|\ge e^{C(\eta,A,C)n^{1/(A-1)}},\quad\forall n\ge N,\label{eq:2.16}
\end{equation}
where $C(\eta,A,C)$ is defined by,
\begin{equation}
C(\eta,A,C)=\left(\frac{(1-\eta)\eta^{A}}{C}\right)^{\frac{1}{A-1}}.\label{eq:2.17}
\end{equation}
\end{enumerate}
\end{thm}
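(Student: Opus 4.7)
The approach I would take is to apply Jensen's formula (\ref{eq:2.11}) and extract constraints on the moduli of the zeros. Given any $C$ as in (\ref{eq:2.14}), there exists $r_0 > 0$ with $\log M(R,f) \le C\log^A R$ for all $R > r_0$. Combining with Jensen's formula and the elementary bound $n(t) \ge n$ on $t > |\rho_n|$, one obtains the master inequality
\[
n\log(R/|\rho_n|) \le \int_0^R \frac{n(t)}{t}\,dt \le C\log^A R
\]
valid whenever $R > \max(|\rho_n|, r_0)$. The rest of the argument is a judicious choice of $R$ in each case, converting this single inequality into the explicit lower bounds (\ref{eq:2.15}) and (\ref{eq:2.16}).

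For $A > 1$, I fix $\eta \in (0,1)$ and set $R = |\rho_n|^{1/\eta}$, which is admissible for $n$ large enough that $R > r_0$. Substituting $\log R = \eta^{-1}\log|\rho_n|$ and $\log(R/|\rho_n|) = \eta^{-1}(1-\eta)\log|\rho_n|$ into the master inequality and simplifying gives
\[
n(1-\eta)\eta^{A-1} \le C (\log|\rho_n|)^{A-1}.
\]
Since $\eta^{A-1} \ge \eta^A$ for $\eta \in (0,1)$ and $A > 1$, this implies $n(1-\eta)\eta^A \le C (\log|\rho_n|)^{A-1}$; taking $(A-1)$-th roots then yields (\ref{eq:2.16}) with the constant $C(\eta,A,C)$ of (\ref{eq:2.17}).

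For $0 < A \le 1$, I argue by contradiction. Suppose $\log|\rho_n| \le 2(2C)^{1/A} n^{1/A}$ and plug $L := \log R = 4(2C)^{1/A} n^{1/A}$ into the master inequality; since $L > \log|\rho_n|$ under the supposition, it applies, and $CL^A = 2 \cdot 4^A C^2 n$ gives
\[
\log|\rho_n| \ge L - CL^A/n = 4(2C)^{1/A}n^{1/A} - 2 \cdot 4^A C^2,
\]
which strictly exceeds $2(2C)^{1/A}n^{1/A}$ for all sufficiently large $n$. The resulting contradiction establishes (\ref{eq:2.15}) from some index $N$ onward.

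The main conceptual obstacle is the assertion that $f$ has \emph{infinitely many} nonzero roots. Hypothesis (\ref{eq:2.12}) only forces $f$ to be of order zero; Hadamard's factorization then ensures $f$ is either a polynomial or a transcendental function with infinitely many zeros, and in the polynomial case the bounds (\ref{eq:2.15})--(\ref{eq:2.16}) are vacuous for $n$ beyond the degree. Accordingly, the conclusion is naturally read as an assertion about each zero $\rho_n$ that actually exists, which matches the paper's applications to transcendental entire $q$-functions.
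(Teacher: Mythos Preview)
Your proof is correct and uses the same Jensen-formula engine as the paper, with only a tactical difference: the paper first derives an explicit bound on the counting function $n(R)$ (via the doubling trick $R\mapsto 2R$ for $A\le 1$ and the substitution $R=\Delta^\eta$ for $A>1$) and then chooses $R_n$ so that $n(R_n)<n$, whereas you substitute a well-chosen $R$ directly into the master inequality $n\log(R/|\rho_n|)\le C\log^A R$. Your handling of the infinitely-many-zeros assertion is also more careful than the paper's, which simply cites an external result on order-zero entire functions without addressing the polynomial case.
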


\begin{proof}
By (\ref{eq:2.14}) the order $\rho(f)$ of the entire function function
$f(z)$, 
\begin{equation}
{\displaystyle \rho(f)=\limsup_{r\to\infty}\frac{\log M(r,f)}{\log r}=0.}\label{eq:2.18}
\end{equation}
Then by \cite[Theorem 1.2.5]{IsmailBook} we conclude that $f(z)$
has infinitely many zeros, and they can arranged in (\ref{eq:2.13})
since the root sequence has no limit points on the finite part of
the complex plan $\mathbb{C}$. 
\begin{enumerate}
\item Proof for the case $0<A\le1$. Let $n(R)$ denote the number of roots
$\rho_{n}$ inside the closed disk $\left|z\right|\le R$ (counted
with multiplicities). We first prove that for all sufficiently large
$R\ge2$,
\begin{equation}
n(R)\le\frac{C2^{A}}{\log2}\log^{A}R,\label{eq:2.19}
\end{equation}
where $C$ satisfies (\ref{eq:2.12}). Since for sufficiently large
$R\ge2$ we have 
\begin{equation}
\log M(2R,f)\le C\log^{A}2R\le C2^{A}\log^{A}R,\label{eq:2.20}
\end{equation}
then we apply Jensen's formula (\ref{eq:2.8}) to $f(z)$ on the closed
disk $|z|\le2R$ to get
\begin{equation}
\begin{aligned} & \sum_{\left|\rho\right|\le2R}\log\frac{2R}{\left|\rho\right|}\le\frac{1}{2\pi}\int_{0}^{2\pi}\log\left|f(2Re^{i\theta})\right|d\theta\\
 & \le\frac{1}{2\pi}\int_{0}^{2\pi}\log M(2R,f)d\theta=\log M(2R,f)\le C2^{A}\log^{A}R.
\end{aligned}
\label{eq:2.21}
\end{equation}
We break the left-hand sum into two sums according to $\left|\rho\right|\le R$,
keep the terms in the first sum then,
\begin{equation}
\begin{aligned} & C2^{A}\log^{A}R\ge\sum_{\left|\rho\right|\le2R}\log\frac{2R}{\left|\rho\right|}\\
 & \ge\sum_{\left|\rho\right|\le R}\log\frac{2R}{\left|\rho\right|}+\sum_{R<\left|\rho\right|\le2R}\log\frac{2R}{\left|\rho\right|}\ge n(R)\log2,
\end{aligned}
\label{eq:2.22}
\end{equation}
which proves (\ref{eq:2.19}). Let $\left\{ R_{n}\right\} _{n\in\mathbb{N}}$
be a sequence of positive numbers such that
\begin{equation}
C2^{A+1}\log^{A}R_{n}=n,\quad R_{n}=e^{2(2C)^{1/A}n^{1/A}},\label{eq:2.23}
\end{equation}
then,
\begin{equation}
n(R_{n})\le\frac{C2^{A}}{\log2}\log^{A}R_{n}=\frac{C2^{A}}{\log2}\frac{n}{C2^{A+1}}<\frac{10}{13}n.\label{eq:2.24}
\end{equation}
Thus, for all sufficiently large $n$ there are at most $\frac{10}{13}n<n$
roots (\ref{eq:2.10}) inside the closed disk $\left\{ z:\left|z\right|\le R_{n}\right\} $,
the $n$-th root $\rho_{n}$ must be outside the disk. Hence, $\left|\rho_{n}\right|>R_{n}=e^{2(2C)^{1/A}n^{1/A}}$. 
\item Proof for the case $A>1$. Given any positive number $\eta$ with
$0<\eta<1$ we observe that for any sufficiently large $\Delta>0$,
\begin{equation}
\int_{0}^{\Delta}\frac{n(t)}{t}dt\ge\int_{\Delta^{\eta}}^{\Delta}\frac{n(t)}{t}dt\ge n\left(\Delta^{\eta}\right)\int_{\Delta^{\eta}}^{\Delta}\frac{dt}{t}=n\left(\Delta^{\eta}\right)(1-\eta)\log\Delta.\label{eq:2.25}
\end{equation}
Then by (\ref{eq:2.11}) for any sufficiently large $\Delta>0$, 
\begin{equation}
n\left(\Delta^{\eta}\right)(1-\eta)\log\Delta\le\frac{1}{2\pi}\int_{0}^{2\pi}\log\left|f\left(\Delta e^{i\theta}\right)\right|d\theta\le C\log^{A}\Delta,\label{eq:2.26}
\end{equation}
 thus,
\begin{equation}
n\left(\Delta^{\eta}\right)\le\frac{C}{1-\eta}\log^{A-1}\Delta.\label{eq:2.27}
\end{equation}
 Let $R=\Delta^{\eta}$ then
\begin{equation}
n\left(R\right)\le\frac{C}{1-\eta}\log^{A-1}R^{1/\eta}=\frac{C\eta^{1-A}}{1-\eta}\log^{A-1}R.\label{eq:2.28}
\end{equation}
Let 
\begin{equation}
\frac{C}{(1-\eta)\eta^{A}}\log^{A-1}R_{n}=n,\quad R_{n}=\exp\left(C(\eta,A,C)n^{\frac{1}{A-1}}\right),\label{eq:2.29}
\end{equation}
where $C(\eta,A,C)$ is defined in (\ref{eq:2.17}), then 
\begin{equation}
n\left(R_{n}\right)\le\frac{C\eta^{1-A}}{1-\eta}\log^{A-1}R_{n}=\eta n<n.\label{eq:2.30}
\end{equation}
 Thus for any $C$ in (\ref{eq:2.14}) and $0<\eta<1$ there exists
positive integer $N$ such that
\begin{equation}
\left|\rho_{n}\right|\ge R_{n}=\exp\left(C(\eta,A,C)n^{1/(A-1)}\right),\quad\forall n\ge N.\label{eq:2.31}
\end{equation}
 
\end{enumerate}
\end{proof}
Apply Theorem \ref{thm:series} to Lemma \ref{lem:modulus} we obtain
the following result.
\begin{cor}
\label{cor:special-sums}Let $f(z)$ be the entire function defined
in (\ref{eq:2.3}), then it has infinitely many nonzero roots $\left\{ \rho_{n}\right\} _{n\in\mathbb{N}}$
satisfying (\ref{eq:2.13}) and for any number $C$ satisfying 
\begin{equation}
C>-\frac{1}{4\alpha\log\left|q\right|}>0,\label{eq:2.32}
\end{equation}
there exists a positive integer $N$ such that 
\begin{equation}
\left|\rho_{n}\right|\ge e^{C(\eta,C)n},\quad\forall n\ge N,\label{eq:2.33}
\end{equation}
where 
\begin{equation}
C(\eta,C)=\frac{(1-\eta)\eta^{2}}{C}.\label{eq:2.34}
\end{equation}
\end{cor}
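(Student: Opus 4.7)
The plan is to combine Lemma~\ref{lem:modulus} with Theorem~\ref{thm:series} applied at $A=2$. Lemma~\ref{lem:modulus} already hands us the growth bound
\[
\limsup_{r\to\infty}\frac{\log M(r,f)}{\log^{2}r}\le -\frac{1}{4\alpha\log|q|},
\]
so condition (\ref{eq:2.12}) of Theorem~\ref{thm:series} is verified with $A=2$, and any $C$ satisfying (\ref{eq:2.32}) also satisfies (\ref{eq:2.14}). Since $A=2>1$ falls under case (2) of Theorem~\ref{thm:series}, the exponent $1/(A-1)$ collapses to $1$, and the constant $C(\eta,A,C)$ of (\ref{eq:2.17}) reduces to $\bigl((1-\eta)\eta^{2}/C\bigr)^{1}=(1-\eta)\eta^{2}/C$, which is precisely (\ref{eq:2.34}). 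So the bound (\ref{eq:2.33}) falls out immediately from (\ref{eq:2.16}).

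The one nuisance is that Theorem~\ref{thm:series} is stated under the normalization $f(0)=1$, whereas the $f$ of Corollary~\ref{cor:special-sums} need not satisfy this. I would handle this by a preliminary reduction. If $f_{0}\ne 0$, replace $f(z)$ by $f(z)/f_{0}$; this leaves the zero set unchanged and only multiplies $M(r,f)$ by a constant, which does not affect the limsup in (\ref{eq:2.12}). If $f_{0}=0$, let $k\ge 1$ be the smallest index with $f_{k}\ne 0$ and write $f(z)=z^{k}g(z)$ with $g$ still of the form (\ref{eq:2.3}) (after an obvious index shift) and $g(0)\ne 0$; the nonzero roots of $f$ coincide with those of $g$, and again the growth bound is preserved up to a factor of $r^{k}$, which is absorbed by the $\log^{2}r$ normalization. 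In either case we may assume $f(0)=1$ without changing the statement to be proved.

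With this reduction in place, the proof is essentially one line: feed the function into Theorem~\ref{thm:series} with $A=2$, take any $C>-1/(4\alpha\log|q|)$ and any $\eta\in(0,1)$, and read off (\ref{eq:2.33}) from (\ref{eq:2.16}). The infinitude of the nonzero zeros and the ordering (\ref{eq:2.13}) come directly from the conclusion of Theorem~\ref{thm:series}. There is no substantive obstacle beyond the normalization bookkeeping mentioned above; all the analytic content sits in Lemma~\ref{lem:modulus} (the $\log^{2}r$ majorant) and in the Jensen-formula argument of Theorem~\ref{thm:series}.
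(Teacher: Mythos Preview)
Your proposal is correct and follows essentially the same route as the paper: reduce to the normalized case $f(0)=1$ by factoring out the lowest power $z^{\nu}$ and dividing by the leading coefficient (the paper writes $g(z)=f(z)z^{-\nu}/(f_{\nu}q^{\alpha\nu^{2}})$ and checks $\sup_{n}|g_{n}|<\infty$), then invoke Theorem~\ref{thm:series} with $A=2$ together with the bound from Lemma~\ref{lem:modulus}. The only cosmetic difference is that you treat the case $f_{0}\neq 0$ separately, whereas the paper folds it into $\nu=0$.
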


\begin{proof}
The only complication comes from 
\begin{equation}
\nu=\inf\left\{ n:f_{n}\neq0\right\} >0.\label{eq:2.35}
\end{equation}
In this case we have
\begin{equation}
f(z)=\sum_{n=\nu}^{\infty}f_{n}q^{\alpha n^{2}}z^{n}=z^{\nu}\sum_{n=0}^{\infty}\left(f_{n+\nu}q^{2\alpha n\nu+\alpha\nu^{2}}\right)q^{\alpha n^{2}}z^{n}.\label{eq:2.36}
\end{equation}
Let
\begin{equation}
g(z)=\frac{f(z)z^{-\nu}}{f_{\nu}q^{\alpha\nu^{2}}}=\sum_{n=0}^{\infty}g_{n}q^{\alpha n^{2}}z^{n}\label{eq:2.37}
\end{equation}
with $g(0)=1$ and
\begin{equation}
\sup_{n\ge0}\left|g_{n}\right|=\sup_{n\ge0}\left|\frac{f_{n+\nu}q^{2\alpha n\nu+\alpha\nu^{2}}}{f_{\nu}q^{\alpha\nu^{2}}}\right|\le\frac{\sup_{n\ge0}\left|f_{n}\right|}{\left|f_{\nu}\right|}<\infty.\label{eq:2.38}
\end{equation}
We apply Theorem (\ref{thm:series}) with $A=2$ to $g(z)$ instead
of $f(z)$.
\end{proof}
\begin{example}
For $k>2,\ a_{k}>0,\ t\in\mathbb{R}\backslash\left\{ 0\right\} ,\ b_{n}>0$
and any $\alpha>0$ we may write 
\begin{equation}
\begin{aligned} & h(z)=\sum_{n=0}^{\infty}q^{a_{0}+a_{1}n+\cdots+a_{k}n^{k}}b_{n}^{it}z^{n}=\sum_{n=0}^{\infty}h_{n}q^{\alpha n^{2}}z^{n}\\
 & =\sum_{n=0}^{\infty}q^{a_{0}+a_{1}n+(a_{2}-\alpha)n^{2}+\cdots+a_{k}n^{k}}b_{n}^{it}q^{\alpha n^{2}}z^{n},
\end{aligned}
\label{eq:2.39}
\end{equation}
then $\sum_{n=0}^{\infty}h_{n}q^{\alpha n^{2}}z^{n}$ satisfies the
conditions of Corollary \ref{cor:special-sums}. Therefore, the entire
function $h(z)$ has infinitely many nonzero roots $\left\{ \rho_{n}\right\} _{n\in\mathbb{N}}$,
and $\log\left|\rho_{n}\right|$ grow at least as fast as a linear
function in $n$. 
\end{example}

\section{Applications \label{sec:Applications}}

In this section we apply Theorem \ref{thm:series} to prove that both
the entire functions $\mathcal{E}_{q}(z;t)$ and $\mathcal{L}(z;\alpha,q)$
have infinitely many nonzero roots and their moduli grow at least
exponentially. In the proofs we need the $q$-binomial theorem, \cite{AndrewsAskeyRoy,DLMF,IsmailBook,IsmailStanton}
\begin{equation}
\frac{(az;q)_{\infty}}{(z;q)_{\infty}}=\sum_{n=0}^{\infty}\frac{(a;q)_{n}}{(q;q)_{n}}z^{n},\label{eq:3.1}
\end{equation}
 where $a,z\in\mathbb{C}$ and $\left|z\right|<1$.

\subsection{Application to $\mathcal{E}_{q}(z;t)$}

In this subsection we assume $0<q<1$.
\begin{cor}
\label{thm:q-wave} For $0<q<1$ and $\left|t\right|<1$ the $q$-exponential
function $\mathcal{E}_{q}\left(z;t\right)$ has infinitely many nonzero
roots $\left\{ \rho_{n}(q,t)\right\} _{n=1}^{\infty}$ such that 
\begin{equation}
0<\left|\rho_{1}(q,t)\right|\le\left|\rho_{2}(q,t)\right|\le\cdots.\label{eq:3.2}
\end{equation}
For any $0<\eta<1$ and $C$ satisfying
\begin{equation}
C>\frac{1}{\log q^{-1}}>0,\label{eq:3.3}
\end{equation}
there exists a positive integer $N$ such that
\begin{equation}
\left|\rho_{n}(q,t)\right|\ge e^{C(\eta,C)n},\quad\forall n\ge N.\label{eq:3.4}
\end{equation}
Furthermore,\textup{ we have}
\begin{equation}
\mathcal{E}_{q}\left(\cos\theta;t\right)\neq0,\quad\left|t\right|<\frac{1-q}{2\left(1+q^{1/4}e^{-\Im\theta}\right)\left(1+q^{1/4}e^{\Im\theta}\right)}.\label{eq:3.5}
\end{equation}
 Similarly, for any $0<q<1$ and $t\in\mathbb{R}$ by \cite[14.1.15]{IsmailBook}
we have 
\begin{equation}
\Re\mathcal{E}_{q}\left(\cos\theta;it\right)>0,\quad\left|t\right|<\sqrt{\frac{(1-q)(1-q^{2})}{2q\left(1+e^{2\Im\theta}\right)\left(1+e^{-2\Im\theta}\right)}}\label{eq:3.6}
\end{equation}
 and 
\begin{equation}
\frac{(1-q)\Im\mathcal{E}_{q}\left(\cos\theta;it\right)}{2tq^{1/4}\cos\theta}>0,\quad\left|t\right|<\sqrt{\frac{(1-q)(1-q^{3})}{2q\left(1+e^{2\Im\theta}\right)\left(1+e^{-2\Im\theta}\right)}}.\label{eq:3.7}
\end{equation}
\end{cor}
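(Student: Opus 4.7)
Plan. The proof splits naturally into two independent parts: the root statements (\ref{eq:3.2}) and (\ref{eq:3.4}) follow from Theorem \ref{thm:series} with $A = 2$, while the nonvanishing ranges (\ref{eq:3.5})--(\ref{eq:3.7}) reduce to elementary geometric-series tail estimates on explicit representations of $\mathcal{E}_q(\cos\theta; t)$. For the first part I would take $f(z) = \mathcal{E}_q(z;t)$ and verify $\mathcal{E}_q(0;t) = 1$: setting $\theta = \pi/2$ in (\ref{eq:1.18}) and expanding $1/(-t^2;q^2)_\infty$ by the $q$-binomial theorem (\ref{eq:3.1}) gives $H_{2k+1}(0|q)=0$ and $H_{2k}(0|q)=(-1)^k(q;q^2)_k$; substituting into (\ref{eq:1.17}) at $z=0$ produces $\sum_k(-1)^k q^{k^2}t^{2k}/(q^2;q^2)_k$, which by Euler's identity $\sum_k q^{k(k-1)/2}z^k/(q;q)_k=(-z;q)_\infty$ at base $q^2$ equals $(qt^2;q^2)_\infty$, so the prefactor cancels. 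Equation (\ref{eq:1.20}) is then precisely the growth hypothesis (\ref{eq:2.12}) with $A=2$ and any $C > 1/\log q^{-1}$, and applying case (2) of Theorem \ref{thm:series} with $A-1 = 1$ and $C(\eta,2,C) = (1-\eta)\eta^2/C$ yields (\ref{eq:3.2}) and (\ref{eq:3.4}) immediately.

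For (\ref{eq:3.5}) I would use the $q$-hypergeometric factorization
\begin{equation*}
\mathcal{E}_q(\cos\theta; t) = \frac{(-t; q^{1/2})_\infty}{(qt^2; q^2)_\infty}\, {}_2\phi_1\!\left(\begin{array}{c} q^{1/4}e^{i\theta}, q^{1/4}e^{-i\theta} \\ -q^{1/2} \end{array}\,\bigg|\, q^{1/2}, -t\right),
\end{equation*}
whose prefactor is nonzero in the stated range, so $\mathcal{E}_q(\cos\theta;t)$ vanishes iff the ${}_2\phi_1$ does. Since the $n=0$ term equals $1$, it suffices to bound the $n\geq 1$ tail by something strictly less than $1$. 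The inequality $|1-w| \leq 1+|w|$ combined with $|e^{\pm i\theta}| = e^{\mp \Im\theta}$ and monotonicity of the factors $1+q^{1/4+k/2}e^{\mp\Im\theta}$ in $k$ gives $|(q^{1/4}e^{\pm i\theta};q^{1/2})_n| \leq (1 + q^{1/4}e^{\mp\Im\theta})^n$, while the denominator simplifies via $(q^{1/2};q^{1/2})_n(-q^{1/2};q^{1/2})_n=(q;q)_n\geq (1-q)^n$. With $A := (1+q^{1/4}e^{-\Im\theta})(1+q^{1/4}e^{\Im\theta})$ the tail is dominated by $\sum_{n\geq 1}(A|t|/(1-q))^n$, and this geometric sum is $<1$ exactly when $2A|t| < 1-q$, which is (\ref{eq:3.5}).

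For (\ref{eq:3.6}) and (\ref{eq:3.7}) I would apply the same technique to the real-imaginary decomposition \cite[(14.1.15)]{IsmailBook} already cited in the corollary: isolate the constant $1$ from $\Re\mathcal{E}_q(\cos\theta;it)$ and the leading term $2tq^{1/4}\cos\theta/(1-q)$ from $\Im\mathcal{E}_q(\cos\theta;it)$, and then bound the remaining series termwise by $|1-w|\leq 1+|w|$, now acting on $q^2$-shifted factorials (this is what replaces the factors $(1+q^{1/4}e^{\pm\Im\theta})$ by $(1+e^{\pm 2\Im\theta})$) with denominators satisfying $(q^2;q^2)_n\geq (1-q^2)^n$ or $(q^3;q^2)_n\geq (1-q^3)^n$ depending on the parity, so that the geometric sums in $t^2$ produce the thresholds in (\ref{eq:3.6}) and (\ref{eq:3.7}). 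The main obstacle is the stage-2 input: a direct bound on (\ref{eq:1.17}) via the natural estimate $|H_n(\cos\theta|q)|\leq H_n(\cosh\Im\theta|q)$ together with $q^{n^2/4}\leq q^{n/4}$ only yields the coarser $|t|<(1-q)/(4q^{1/4}\cosh\Im\theta)$, which misses the product form in (\ref{eq:3.5}); locating the correct $q$-hypergeometric factorization (and its $t\mapsto it$ counterpart) is the delicate point, after which the remaining computations are routine.
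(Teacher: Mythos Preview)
Your proposal is correct and follows essentially the same route as the paper: invoke Theorem~\ref{thm:series} with $A=2$ via the growth estimate (\ref{eq:1.20}), then for (\ref{eq:3.5}) use the ${}_2\phi_1$ representation (\ref{eq:3.8}) and bound the $n\ge 1$ tail by the geometric series $\sum_{n\ge 1}\bigl(A|t|/(1-q)\bigr)^n<1$ with $A=(1+q^{1/4}e^{-\Im\theta})(1+q^{1/4}e^{\Im\theta})$, leaving (\ref{eq:3.6})--(\ref{eq:3.7}) to the analogous computation on \cite[(14.1.15)]{IsmailBook}. Your explicit verification that $\mathcal{E}_q(0;t)=1$ is a point the paper takes for granted, and the paper in turn spends a paragraph on the analytic continuation in $\theta$ that justifies using (\ref{eq:3.8}) off the real segment, which you pass over; otherwise the arguments coincide.
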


\begin{proof}
The first part of this theorem are just consequences of Theorem \ref{thm:series}
and (\ref{eq:1.20}). We only to show (\ref{eq:3.5}). Assertions
(\ref{eq:3.6}) and (\ref{eq:3.7}) can be proved similarly.

It is known that for $\left|t\right|<1$ and $\theta\in[0,\pi]$,
\cite[Theorem 14.1.2]{IsmailBook}
\begin{equation}
\frac{\left(qt^{2};q^{2}\right)_{\infty}}{\left(-t;q^{1/2}\right)_{\infty}}\mathcal{E}_{q}\left(\cos\theta;t\right)={}_{2}\phi_{1}\left(\begin{array}{c}
q^{1/4}e^{i\theta},q^{1/4}e^{-i\theta}\\
-q^{1/2}
\end{array}\bigg|q^{1/2},-t\right).\label{eq:3.8}
\end{equation}
For any $n\in\mathbb{N}$ let
\begin{equation}
c_{n}=\frac{\left(q^{1/4}e^{i\theta},q^{1/4}e^{-i\theta};q^{1/2}\right)_{n}}{\left(-q^{1/2},q^{1/2};q^{1/2}\right)_{n}}(-t)^{n},\label{eq:3.9}
\end{equation}
then,
\begin{equation}
\begin{aligned} & \left|\frac{c_{n}}{c_{n-1}}\right|\le\left|\frac{\left(1-q^{n/2-1/4}e^{i\theta}\right)\left(1-q^{n/2-1/4}e^{-i\theta}\right)}{1-q^{n}}t\right|\\
 & \le\frac{\left(1+q^{1/4}e^{\Im\theta}\right)\left(1+q^{1/4}e^{-\Im\theta}\right)}{1-q}\left|t\right|\le\left|t\right|\frac{\left(1+q^{1/4}\right)\left(1+q^{1/4}e^{\left|\Im\theta\right|}\right)}{1-q}.
\end{aligned}
\label{eq:3.10}
\end{equation}
Clearly, for any $\delta>0$ and any $t$ satisfying
\begin{equation}
\left|t\right|<\frac{1-q}{\left(1+q^{1/4}\right)\left(1+q^{1/4}e^{\delta}\right)}<1,\label{eq:3.11}
\end{equation}
the infinite series on the right hand side of (\ref{eq:3.8}) converges
uniformly in $\left|\Im\theta\right|\le\delta$. Then it defines a
analytic function in variable $\theta$ inside $\left\{ \theta:\left|\Im\theta\right|<\delta\right\} $.
On the other hand it is known that for any $\left|t\right|<1$ the
function on the left hand side of (\ref{eq:3.8}),
\begin{equation}
\frac{\left(qt^{2};q^{2}\right)_{\infty}}{\left(-t;q^{1/2}\right)_{\infty}}\mathcal{E}_{q}\left(z;t\right)\label{eq:3.12}
\end{equation}
is analytic in z, and they agrees on $z\in[-1,1]$, then they must
be the same in $\left\{ \theta:\left|\Im\theta\right|<\delta\right\} $
by analytic continuation.

By (\ref{eq:3.10}) we have
\begin{equation}
\left|c_{n}\right|\le\left(\left|t\right|\frac{\left(1+q^{1/4}e^{-\Im\theta}\right)\left(1+q^{1/4}e^{\Im\theta}\right)}{1-q}\right)^{n},\quad\forall n\in\mathbb{N}.\label{eq:3.13}
\end{equation}
If 
\begin{equation}
\left|t\right|<\frac{1-q}{2\left(1+q^{1/4}e^{-\Im\theta}\right)\left(1+q^{1/4}e^{\Im\theta}\right)},\label{eq:3.14}
\end{equation}
then 
\begin{equation}
\begin{aligned} & \left|\frac{\left(qt^{2};q^{2}\right)_{\infty}}{\left(-t;q^{1/2}\right)_{\infty}}\mathcal{E}_{q}\left(\cos\theta;t\right)\right|\ge1-\left|\sum_{n=1}^{\infty}c_{n}\right|\\
 & \ge1-\sum_{n=1}^{\infty}\left(\left|t\right|\frac{\left(1+q^{1/4}e^{-\Im\theta}\right)\left(1+q^{1/4}e^{\Im\theta}\right)}{1-q}\right)^{n}>1-\sum_{n=1}^{\infty}\frac{1}{2^{n}}=0.
\end{aligned}
\label{eq:3.15}
\end{equation}
 
\end{proof}
For any $\alpha>0$ by \cite[(5.52)]{IsmailRZhang2}
\begin{equation}
\begin{aligned} & q^{\beta^{2}/2}\left(q^{\alpha+\beta+n+\frac{1}{2}};q\right)_{\infty}L_{n}^{\left(\alpha+\beta-\frac{1}{2}\right)}\left(x;q\right)\\
 & =\sqrt{\frac{\log q^{-1}}{2\pi}}\int_{-\infty}^{\infty}\frac{\left(xq^{\alpha+iy};q\right)_{n}q^{y^{2}/2+i\beta y}}{\left(q;q\right)_{n}\left(-q^{\alpha+iy};q\right)_{\infty}}dy
\end{aligned}
\label{eq:3.21}
\end{equation}
 and \cite[(5.30)]{IsmailRZhang2} 
\begin{equation}
q^{\alpha^{2}}A_{q}\left(q^{2\alpha}z\right)=\sqrt{\frac{\log q^{-1}}{4\pi}}\int_{-\infty}^{\infty}\frac{q^{y^{2}/4+i\alpha y}}{\left(-ze^{iy};q\right)_{\infty}}dy,\label{eq:3.22}
\end{equation}
we have

\begin{equation}
\begin{aligned} & \mathcal{L}(z;\alpha,q)=\sum_{n=0}^{\infty}\left(q^{\alpha+2n+\frac{1}{2}};q\right)_{\infty}L_{n}^{\left(\alpha+n-\frac{1}{2}\right)}\left(z;q\right)q^{n^{2}/2}q^{\alpha n}\\
 & =\sqrt{\frac{\log q^{-1}}{2\pi}}\sum_{n=0}^{\infty}\int_{-\infty}^{\infty}\frac{\left(zq^{\alpha+iy};q\right)_{n}q^{y^{2}/2}q^{(\alpha+iy)n}}{\left(q;q\right)_{n}\left(-q^{\alpha+iy};q\right)_{\infty}}dy=\sqrt{\frac{\log q^{-1}}{2\pi}}\int_{-\infty}^{\infty}\frac{\left(zq^{2\alpha+2iy};q\right)_{\infty}q^{y^{2}/2}dy}{\left(q^{\alpha+iy},-q^{\alpha+iy};q\right)_{\infty}}\\
 & =\sqrt{\frac{\log q^{-1}}{2\pi}}\int_{-\infty}^{\infty}\frac{\left(zq^{2\alpha+2iy};q\right)_{\infty}q^{y^{2}/2}dy}{\left(q^{2\alpha+2iy};q^{2}\right)_{\infty}}=\sqrt{\frac{\log q^{-1}}{2\pi}}\sum_{n=0}^{\infty}\frac{\left(-zq^{2\alpha-1/2}\right)^{n}q^{n^{2}/2}}{(q;q)_{n}}\int_{-\infty}^{\infty}\frac{q^{2niy+y^{2}/2}dy}{\left(q^{2\alpha+2iy};q^{2}\right)_{\infty}}\\
 & =\sum_{n=0}^{\infty}\frac{\left(-zq^{2\alpha-1/2}\right)^{n}q^{5n^{2}/2}}{(q;q)_{n}}A_{q^{2}}\left(-q^{4n+2\alpha}\right),
\end{aligned}
\label{eq:3.23}
\end{equation}
which gives 
\begin{equation}
\begin{aligned} & \sum_{n=0}^{\infty}\left(q^{\alpha+2n+\frac{1}{2}};q\right)_{\infty}L_{n}^{\left(\alpha+n-\frac{1}{2}\right)}\left(z;q\right)q^{n^{2}/2}q^{\alpha n}\\
 & =\sum_{n=0}^{\infty}\frac{\left(-zq^{2\alpha-1/2}\right)^{n}q^{5n^{2}/2}}{(q;q)_{n}}A_{q^{2}}\left(-q^{4n+2\alpha}\right).
\end{aligned}
\label{eq:3.24}
\end{equation}
Clearly, for any $\alpha>0$ the infinite series $\mathcal{L}(z;\alpha,q)$
in (\ref{eq:1.16}) defines an entire function of $z\in\mathbb{C}$.
By Corollary \ref{cor:special-sums} $\mathcal{L}(z;\alpha,q)$ has
infinitely many zeros and their magnitude grows at least as fast as
$e^{cn}$ for some $c>0$.

\end{document}